\documentclass[leqno, english]{amsart}
\usepackage{amsfonts}
\usepackage{amsthm}
\usepackage{amsmath}
\usepackage{amscd}
\usepackage[latin2]{inputenc}
\usepackage{t1enc}
\usepackage[mathscr]{eucal}
\usepackage{indentfirst}
\usepackage{graphicx}
\usepackage{graphics}
\usepackage{pict2e}
\usepackage{epic}
\usepackage{bbm}
\usepackage[left=1in,right=1in,top=1in,bottom=1in]{geometry}
\usepackage{epstopdf} 

\usepackage{hyperref}
\hypersetup{
	colorlinks = true,
    linkcolor = blue,
    citecolor = blue,
    urlcolor = black,
}


\theoremstyle{plain}
\newtheorem{thm}{Theorem}
\newtheorem{lem}[thm]{Lemma}

\newtheorem{claim}[thm]{Claim}

\theoremstyle{definition}

\newtheorem*{remark}{Remark}
\newtheorem{problem}[thm]{Problem}
\newtheorem*{example}{Example}


\newcommand{\expect}{\mathbb E}

\newcommand{\Z}{\mathbb Z}

\newcommand{\vsum}[3]{\sum_{#2 \ni #1} #3(#2)}

\newcommand{\cC}{\mathcal{C} }

\newcommand{\cH}{\mathcal{H} }

\newcommand{\cK}{\mathcal{K} }

\newcommand{\pr}{\mathbb{P}}

\newcommand{\beq}[1]{\begin{equation}\label{#1}}
\newcommand{\enq}[0]{\end{equation}}

\newcommand{\nin}[0]{\noindent}

\newcommand{\sub}[0]{\subseteq}
\newcommand{\sm}[0]{\setminus}

\newcommand{\ov}[0]{\overline}

\newcommand{\pp}{\mathbf{p}}
\newcommand{\half}{\mathbf{1/2}}

\newcommand{\0}[0]{\emptyset}

\newcommand{\C}[2]{{{#1}\choose{{#2}}}}

\newcommand{\gd}[0]{\delta }
\newcommand{\gD}[0]{\Delta }

\newcommand{\gl}[0]{\lambda }

\newcommand{\eps}[0]{\varepsilon }

\newcommand{\cond}{\mid}

\newcommand{\prh}[1][]{\pr_h}

\setlength{\textwidth}{15.9cm} \setlength{\textheight}{23cm}
\setlength{\parskip}{2mm} \setlength{\parindent}{6mm}

\addtolength{\textheight}{9.0mm}
\addtolength{\footskip}{9.0mm}

\setcounter{equation}{0}

\begin{document}

\title{Reverse discrepancy and almost zero-sum stars}

\author{Quentin Dubroff}
\email{qcd2@math.rutgers.edu}
\address{Department of Mathematics, Rutgers University \\
Hill Center for the Mathematical Sciences \\
110 Frelinghuysen Rd.\\
Piscataway, NJ 08854-8019, USA}

\begin{abstract}
For $f$ chosen from the $\{-1,1\}$-valued function on the edges of a hypergraph $\mathcal{H} = (V,E)$ with $\sum_{e \in E} f(e) = 0$, how large can one make $\min_{v\in V} |\sum_{e \ni v} f(e)|$? This question may be viewed as a reverse version of the hypergraph discrepancy problem or as a relaxation of the zero-sum Ramsey problem for stars. We prove exact results when $\mathcal{H}$ is a complete or equipartite hypergraph.
\end{abstract}

\maketitle

\section{Introduction}

The work in this paper was originally motivated by a notion introduced by Balogh and Smyth \cite{BS}. They define the \emph{unbalancedness} of a $\{-1,1\}$-valued function $f$ on the edges of a hypergraph $\cH= (V,E)$ as
\[X(f) = \min_{v\in V} \,\Bigl|\sum_{v\in A} f(A)\Bigr|.\]
For a fixed hypergraph and $\alpha \in [-1, 1]$, they consider the problem of determining 
\[X(\mathcal{H},\alpha) = \max \{X(f) : \sum_{A\in E} f(A) = \alpha E(\mathcal{H})\}.\]
This problem may be viewed as a reverse of the classical hypergraph discrepancy problem (see e.g.\ \cite{Mat})---instead of determining $\min_f \max_{v\in V} |\sum_{v \in A} f(A)|$, we ask for $\max_f \min_{v\in V} |\sum_{v \in A} f(A)|$. The definition of reverse discrepancy is slightly at odds with the convention in hypergraph discrepancy theory to study imbalances on edges for functions on the vertices of a hypergraph. The definitions and results of this paper could be stated equivalently in this way (using hypergraph duality), but for the problems under consideration here, it seems more natural to reverse the roles of vertices and edges.

Balogh and Smyth prove an upper bound on $X(\mathcal{H}, \alpha)$ when $\cH$ is regular and equipartite i.e.\ when $\cH$ is a regular subgraph of the complete $r$-partite hypergraph with vertex classes of the same size, say $n$. Letting $\mathcal{H}$ be regular of degree $Dn^{r-1}$, they show
\beq{BSres}
X(\cH,\alpha) \leq \sqrt{\frac{D + \alpha^2 D^2(r-1)}{r}}n^{r-1}.
\enq
They remark that this is most likely not best possible. However, \eqref{BSres} gives the correct $\Theta(1/\sqrt{r})$ dependence on $r$ for $\alpha = 0$ (with $D$ a fixed constant) and is essentially optimal for large $r$ when $\alpha \neq 0$. We confine ourselves to the case $\alpha = 0$ and define $X(\cH) = X(\cH,0)$. Our first result is a generalization and best possible improvement of \eqref{BSres} in this case. We give an upper bound on $X(\cdot)$ over the class of $r$-uniform $r$-equipartite hypergraphs with fixed numbers of vertices and edges. This upper bound is achieved often, depending on some divisibility constraints. As a special case, we show that if $\mathcal{K}$ is the complete $r$-partite $r$-uniform hypergraph with vertex classes of size $n$, then
\beq{complete}
X(\mathcal{K}) \leq \binom{r-1}{\lfloor r/2 \rfloor}\Big(\frac{n}{2}\Big)^{r-1}.
\enq
Equality holds when $n$ is divisible by four. The full statement of our result involves description of the maxima and is deferred to Theorem~\ref{partiteThm} in Section~\ref{partite}.

The reverse discrepancy problem, like hypergraph discrepancy, has antecedents in hypergraph Ramsey theory. Caro and Yuster \cite{CY} study a notion related to $X(\cH,\alpha)$ for functions on graphs in connection with zero-sum Ramsey theory.
They prove that if a function $f:E(K_n) \to [-1,1]$ on the edges of the complete graph satisfies $|\sum_{e\in E} f(e)| \leq 2(n-1)$, then for every graph $H$ with maximum degreee $\gD$, there is a copy $H'$ in $K_n$ such that $|\sum_{e\in H'} f(e)| \leq 2\gD$. Mohr, Pardey, and Rautenbach \cite{MPR} improve upon a special case of this result, showing (in our notation) that $X(K_n) \leq n/2 - 1$, which is easily seen to be best possible. Using a technique developed to prove Theorem~\ref{partiteThm}, we extend this result to complete hypergraphs of every uniformity. 
\begin{thm}\label{compthm}
Let $\cK_n^{r}$ be the complete $r$-uniform hypergraph on $n$-vertices. Then with $k = \lceil n/2 \rceil$,
\[X(\cK_n^{r}) \leq  \frac{\sum_{\ell=0}^r |\lceil r/2 \rceil -\ell|\binom{n-k}{\ell}\binom{k}{r-\ell}}{k}\]
\end{thm}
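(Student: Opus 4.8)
The plan is to prove the bound by a weighted averaging over a carefully chosen ``half'' of the vertices, exploiting the zero-sum constraint $\sum_{e} f(e) = 0$ to recenter the signed degrees. Write $d_f(v) = \sum_{e \ni v} f(e)$ for the signed degree of $v$, so that $X(f) = \min_v |d_f(v)|$, and fix a $\{-1,1\}$-valued $f$ on the edges of $\cK_n^r$ with $\sum_e f(e) = 0$; set $m = \min_v |d_f(v)|$. If some $d_f(v) = 0$ then $m = 0$ and the bound holds trivially, so I may assume every $d_f(v) \neq 0$.

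First I would record the global identity $\sum_{v} d_f(v) = r\sum_e f(e) = 0$, since summing signed degrees counts each edge once for each of its $r$ vertices. Hence the vertices split into the sign classes $P = \{v : d_f(v) > 0\}$ and $N = \{v : d_f(v) < 0\}$ with $|P| + |N| = n$, so at least one of them has size $\ge k = \lceil n/2 \rceil$. Replacing $f$ by $-f$ if necessary (which changes neither $X(f)$, nor the zero-sum condition, nor the claimed bound), I may assume $|P| \ge k$ and fix any $A \subseteq P$ with $|A| = k$; by symmetry the resulting bound will depend only on $|A| = k$.

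The heart of the argument is then the chain
\[
km \;\le\; \sum_{v \in A} d_f(v) \;=\; \sum_{e} f(e)\,|e \cap A| \;=\; \sum_e f(e)\bigl(|e\cap A| - \lfloor r/2\rfloor\bigr) \;\le\; \sum_e \bigl|\,|e\cap A| - \lfloor r/2\rfloor\,\bigr|,
\]
where the first inequality uses $d_f(v) \ge m$ for each of the $k$ vertices of $A$, the second equality swaps the order of summation, the third uses $\sum_e f(e) = 0$ to subtract the constant $\lfloor r/2\rfloor$ for free, and the final step is the triangle inequality together with $|f(e)| = 1$. It then remains to evaluate the last sum: writing $B = V \sm A$, which has size $n - k$, and grouping edges by $\ell = |e \cap B|$, there are $\binom{n-k}{\ell}\binom{k}{r-\ell}$ edges with $|e\cap A| = r - \ell$, and since $r - \lfloor r/2\rfloor = \lceil r/2\rceil$ we have $\bigl|\,|e\cap A| - \lfloor r/2\rfloor\,\bigr| = |\lceil r/2\rceil - \ell|$. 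Substituting and dividing by $k$ yields exactly the stated bound.

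The only genuinely delicate point is the choice of $A$: the averaging inequality $km \le \sum_{v\in A} d_f(v)$ works only because $A$ is sign-monochromatic, which lets me replace $|d_f(v)|$ by $d_f(v)$ and hence express the sum linearly in $f$. The pigeonhole step is precisely what forces the denominator to be $k = \lceil n/2\rceil$. Everything else is bookkeeping; in particular the recentering constant $\lfloor r/2\rfloor$ is simply the value that, after invoking the zero-sum hypothesis, produces the coefficients $|\lceil r/2\rceil - \ell|$ appearing in the statement (it is essentially the median of $|e\cap A|$ for this balanced split, which is why one expects the bound to be sharp and why it recovers $X(K_n) \le n/2 - 1$ when $r = 2$).
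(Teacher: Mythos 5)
Your proof is correct, and it takes a genuinely different---and considerably more elementary---route than the paper. You prove the bound directly for an arbitrary zero-sum $f$: pigeonhole on the sign classes of the signed degrees gives a sign-monochromatic set $A$ of size $k=\lceil n/2\rceil$ (after possibly negating $f$), double counting gives $kX(f)\le \sum_e f(e)\,|e\cap A|$, the zero-sum hypothesis lets you recenter by the constant $\lfloor r/2\rfloor$ for free, and the triangle inequality finishes; the bookkeeping with $\ell=|e\cap B|$ and $r-\lfloor r/2\rfloor=\lceil r/2\rceil$ does reproduce exactly the stated formula, and the $r=2$ sanity check recovering $n/2-1$ is right. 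The paper instead runs a structural analysis: it first symmetrizes (Lemma~\ref{compave}) so that $f$ depends only on $|A\cap P|$, where $P$ is the set of vertices of positive signed degree, then uses a shifting argument (Claim~\ref{range}) to show an extremal $f$ is a threshold-type function with at most one level not equal to $\pm 1$, computes the resulting value $\chi(k)$ exactly as a function of the partition size, and finally shows by a binomial-identity computation that $\chi$ is maximized at the near-equipartition. What the longer route buys is structural information: it identifies the (near-)extremal functions, which makes the matching lower-bound example transparent and parallels the semi-threshold description of Theorem~\ref{partiteThm}; your argument yields the stated inequality with minimal machinery but says nothing about which $f$ are extremal. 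Two small points: your final step invokes ``the triangle inequality together with $|f(e)|=1$,'' but all you use (and all you need) is $|f(e)|\le 1$, so your proof in fact covers the paper's broader setting of $[-1,1]$-valued functions, which is the setting in which the bound is claimed to be tight; and your remark that the recentering constant must be $\lfloor r/2\rfloor$ is not essential---any constant would give a valid bound, this choice is simply the one matching the stated right-hand side.
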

\nin A natural example, described in Section~\ref{compsec}, shows this bound is exact in the context of functions with codomain $[-1,1]$ when $n$ is even.

In obtaining these results, we exploit one of the main differences between discrepancy and reverse discrepancy. The reverse discrepancy problem seems to be changed very little by working with the larger class of functions having codomain the interval $[-1,1]$. In fact, the upper bounds presented in this paper hold for this broader class of functions.

\section{Partite Hypergraphs}\label{partite}

Here, $\{-1,1\}^r$ is endowed with the product measure $\mu = \mu_{\pp}$ with $\pp = (p_1,p_2,\ldots, p_r)$ (so $\mu(\{x_i = 1\}) = p_i$), and we write $\mu_{\bf{1/2}}$ when all $p_i=1/2$. The expectation of $f$ with respect to $\mu$ is denoted $\expect_\mu[f]$. Define
\[\overline{X}_\mu(f) = \min_i \Big( \min\Big\{ \expect_\mu [f \cond x_i = 1],-\expect_\mu [f \cond x_i = -1] \Big\}\Big ),\]
which is an analogue of $X(f)$ for functions $f: \{-1,1\}^r \rightarrow [-1,1]$.
We call such a function a \emph{semi-threshold function} if it is of the form
\[
f(x) = \begin{cases}
1 & |x| > k,\\
\beta & |x| = k,\\
0 & -k < |x| < k,\\
-\beta & |x| = -k,\\
-1 & |x| < -k,

\end{cases}
\]
where $|x| = \sum_i x_i$. In this section, we show that maximizers of $X(f)$ on equipartite hypergraphs are essentially semi-threshold functions.

\begin{thm}\label{partiteThm}
Let $\cH = (V,E)$ be an $r$-uniform $r$-equipartite hypergraph with $rn$ vertices, and let $g$ be the semi-threshold function satisfying $\expect_{\mu_\half}[|g|] \leq |E|/n^r$ with $\expect_{\mu_\half}[|g|]$ as large as possible. Then
\[X(\cH) \leq \ov{X}_{\mu_{\half}}(g)n^{r-1}.\]
\end{thm}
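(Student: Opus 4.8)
The plan is to dualize the minimum over vertices into a single signed sum over edges, and then reduce the resulting extremal quantity to a statement about the product measure on $\{-1,1\}^r$, where the semi-threshold function appears naturally. Fix $f : E \to [-1,1]$ with $\sum_{A \in E} f(A) = 0$, write $d(v) = \sum_{A \ni v} f(A)$, so $X(f) = \min_v |d(v)|$, set $t = X(f)$, and let $s(v) = \operatorname{sign}(d(v)) \in \{-1,1\}$. Since each edge meets every part exactly once, summing the identity $s(v)d(v) = |d(v)| \ge t$ over all $rn$ vertices and regrouping by edges gives
\[ rnt \le \sum_{v \in V} |d(v)| = \sum_{A \in E} f(A)\,\sigma_s(A), \qquad \sigma_s(A) := \sum_{v \in A} s(v). \]
Because $\sum_A f(A) = 0$, the right-hand side is unchanged if $\sigma_s(A)$ is replaced by $\sigma_s(A) - c$ for any constant $c$; bounding with $|f| \le 1$ then yields $rnt \le \sum_{A \in E} |\sigma_s(A) - c|$ for every $c$, hence
\[ rnt \le \min_c \sum_{A \in E} |\sigma_s(A) - c|. \]
Thus it suffices to prove $\min_c \sum_{A \in E} |\sigma_s(A) - c| \le r n^r\, \ov{X}_{\mu_\half}(g)$ for every labeling $s \colon V \to \{-1,1\}$.

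Next I would pass to the complete $r$-partite hypergraph. For fixed $s$, let $m_i$ be the number of vertices of part $i$ with $s = 1$ and set $\pp = (p_1,\dots,p_r)$ with $p_i = m_i/n$. Running over the complete edge set, the multiset $\{\sigma_s(A)\}$ is exactly the distribution of $S = \sum_i x_i$ under $\mu_{\pp}$. Since $E$ is an $|E|$-element subset, replacing it by the $|E|$ complete edges on which $|\sigma_s(\cdot) - c|$ is largest only increases the sum, giving
\[ \sum_{A \in E} |\sigma_s(A) - c| \le n^r\, \E_{\mu_{\pp}}\big[\,|S - c|\, h\,\big], \]
where $0 \le h \le 1$ is the indicator (fractional on the boundary shell) of the top $|E|/n^r$ mass of $|S - c|$, so $\E_{\mu_{\pp}}[h] = |E|/n^r$. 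Taking $c$ to be a median of $S$ and noting that placing the mass $h$ on the shells where $|S|$ is largest is exactly a semi-threshold, the right-hand side is a truncated mean absolute deviation of $\mu_{\pp}$; moreover, among semi-thresholds this truncation is the one maximizing $\E_{\mu_\half}[|g|]$ subject to $\E_{\mu_\half}[|g|] \le |E|/n^r$, i.e.\ precisely the $g$ of the theorem.

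The main step, and the step I expect to be the obstacle, is to show this truncated deviation is maximized at the balanced measure $\pp = \half$ together with $c = 0$, so that the bound becomes $n^r\,\E_{\mu_\half}[\,|S|\,|g|\,]$. In words, balancing each part spreads $S$ out as much as possible, and the top-mass truncation should be monotone under this spreading. I would prove this by a coordinatewise balancing (majorization) argument: move each $p_i$ toward $1/2$ one coordinate at a time and verify $\min_c \max_{\E h = |E|/n^r} \E_{\mu_{\pp}}[|S-c|\,h]$ does not decrease. The untruncated case ($h \equiv 1$, corresponding to the complete hypergraph) is the classical fact that the mean absolute deviation of a sum of independent signs is maximal when balanced; the difficulty is that the truncation couples the shells, so one must control the contribution of the boundary shell as $p_i$ moves and argue that the extreme shells (which carry the truncated mass) only grow heavier.

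Granting this, it remains to convert $\E_{\mu_\half}[\,|S|\,|g|\,]$ back to $\ov{X}$. As $g$ is an odd function of $S$, coordinate symmetry gives $\E_{\mu_\half}[g \mid x_i = 1] = -\E_{\mu_\half}[g \mid x_i = -1] = \ov{X}_{\mu_\half}(g)$ for every $i$, so that $\E_{\mu_\half}[x_i g] = \tfrac12\big(\E_{\mu_\half}[g \mid x_i = 1] - \E_{\mu_\half}[g \mid x_i = -1]\big) = \ov{X}_{\mu_\half}(g)$, and since $g = \operatorname{sign}(S)\,|g|$,
\[ \E_{\mu_\half}[\,|S|\,|g|\,] = \E_{\mu_\half}[S g] = \sum_i \E_{\mu_\half}[x_i g] = r\, \ov{X}_{\mu_\half}(g). \]
Chaining the displays gives $rnt \le n^r \cdot r\,\ov{X}_{\mu_\half}(g)$, i.e.\ $t \le n^{r-1}\,\ov{X}_{\mu_\half}(g)$, which is the claimed bound on $X(\cH)$.
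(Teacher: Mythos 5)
There is a fatal gap, and it is exactly at the step you flag as ``the obstacle'' --- but the situation is worse than an unproven lemma: the statement you propose to prove there is false, so the route cannot be completed. Your first reduction replaces $\sum_A f(A)(\sigma_s(A)-c)$ by $\sum_A|\sigma_s(A)-c|$ and then declares it sufficient to prove $\min_c\sum_{A\in E}|\sigma_s(A)-c|\le rn^r\,\ov{X}_{\mu_\half}(g)$ for \emph{every} labeling $s:V\to\{-1,1\}$. Take $r=2$ and $\cH=\cK$ the complete bipartite graph with parts of size $n$ (so $|E|=n^2$, $g=\text{sgn}(x_1+x_2)$, $\ov{X}_{\mu_\half}(g)=1/2$, and your target is $rn^r\ov{X}_{\mu_\half}(g)=n^2$). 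Label $3n/5$ vertices of each part $+1$. Then $\sigma_s$ equals $2$ on $0.36n^2$ edges, $-2$ on $0.16n^2$ edges, and $0$ on $0.48n^2$ edges; the optimal $c$ is the weighted median $c=0$, giving $\min_c\sum_A|\sigma_s(A)-c| = 2(0.36+0.16)n^2 = 1.04n^2 > n^2$. The same computation refutes the ``classical fact'' you invoke: for even $r$ the mean absolute deviation of $S=\sum_i x_i$ is \emph{not} maximized at $\pp=\half$, because unbalancing drains the shell $S=0$ (which contributes nothing to $\E|S-c|$) into the shells $\pm2$, as $\Pr(|S|=2)=p^2+(1-p)^2>1/2$ for $p\neq 1/2$. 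So no balancing/majorization argument can establish the claim you defer; it fails already in the untruncated case.

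The reason your relaxation is too lossy is that it forgets that $s$ must be the sign pattern of the vertex sums of $f$: when $s$ is unbalanced, the minority vertices still must have sums of the minority sign, forcing cancellations invisible to the bound $|f|\le 1$. Equivalently, after your symmetrization the quantity being controlled is $\tfrac1r\E_{\mu_\pp}[fS]$ over all $f$ with $\E_{\mu_\pp}f=0$, $|f|\le 1$, $\E_{\mu_\pp}|f|\le\lambda$, and this genuinely exceeds $\ov{X}_{\mu_\half}(g)$ for unbalanced $\pp$ (in the example above, $f=1$ on the shell $S=2$, $f=-1$ on $S=-2$, $f=-5/12$ on $S=0$ has $\E_{\mu_\pp}f=0$, $\E_{\mu_\pp}|f|=0.72$, and $\tfrac12\E_{\mu_\pp}[fS]=0.52>0.5$). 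The paper avoids exactly this loss: its quantity $\ov{X}_{\mu_\pp}(h)=\min_i\min\big(\E[h\mid x_i=1],-\E[h\mid x_i=-1]\big)$, which for $\E_{\mu_\pp}h=0$ equals $\min_i \E[hx_i]\big/\big(2\max(p_i,1-p_i)\big)$, retains damping factors $2\max(p_i,1-p_i)\ge 1$ that your averaged quantity discards, and its Lemma~\ref{main} compares unbalanced $\pp$ to $\half$ by \emph{jointly} shifting the function and the measure one coordinate at a time --- never by comparing symmetrized sums. Your opening identity (summing $s(v)d(v)$ over vertices), the passage to the complete hypergraph with a fractional top-mass truncation, and the final conversion $\E_{\mu_\half}[Sg]=r\,\ov{X}_{\mu_\half}(g)$ are all correct, and the last of these parallels what the paper does once it has reached the uniform measure; but the step in between, where the min over vertices is traded for an $s$-independent bound, is where the argument irrecoverably breaks.
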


\nin Note the maximum of $\ov{X}_{\mu_\half} (\cdot)$ over the class of semi-threshold functions $f$ with $\expect_{\mu_{\half}} |f| \leq \gl$ is attained either at the unique semi-threshold function with $\expect_{\mu_{\half}} |f| = \gl$ or at the semi-threshold function with threshold $k = \beta =0$ (the latter holding when $r$ is even and $\gl > 1 - \C{r}{r/2}/2^r$). If $f$ is a semi-threshold function (with parameters $k$ and $\beta$) and $\mu = \mu_\half$, then $\expect_\mu [f \cond x_i = 1] = -\expect_\mu [f \cond x_i = -1] = \expect_\mu [f x_i]$, and thus 
\[\ov{X}_{\mu_\half}(f) = \expect_\mu [\sum_i f x_i]/r = \left[2(1-\beta)\C{r-1}{(k+r)/2} + 2\beta \C{r-1}{(k+r-2)/2}\right]\big/2^r.\] Therefore Theorem~\ref{partiteThm} gives an explicit upper bound on $X(\cH)$ for $r$-uniform $r$-equipartite hypergraphs. See Section~\ref{equality} for a discussion of tight cases.

The proof of Theorem~\ref{partiteThm} proceeds by reducing the problem to a highly symmetric case in which we can apply the main lemma of this section:

\begin{lem}\label{main}
Let $f: \{-1,1\}^r \rightarrow [-1,1]$ be such that $\expect_{\mu_{\pp}}[f] = 0$, and let $g$ be the semi-threshold function satisfying $\expect_{\mu_{\half}} [|g|] \leq \expect_{\mu_{\pp}}[|f|]$ with $\expect_{\mu_\half}[|g|]$ as large as possible. Then
\[\overline{X}_{\mu_{\pp}}(f) \leq \overline{X}_{\mu_{\half}}(g).\]
Furthermore, the inequality is strict if $\expect_{\mu_{\pp}}[|f|] = 1$ and some $p_i$ is not $1/2$.
\end{lem}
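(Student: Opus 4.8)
The plan is to rewrite $\overline{X}_{\mu_{\pp}}$ in terms of the single--coordinate correlations $\expect_{\mu_{\pp}}[fx_i]$, pass to a one--dimensional linear program in the level variable $|x|=\sum_i x_i$, and finally compare that program against the fair--coin program that $g$ solves. Fix $i$ and write $q_i=\max(p_i,1-p_i)$. Conditioning on $x_i$ and using $\expect_{\mu_{\pp}}[f]=p_i\expect_{\mu_{\pp}}[f\mid x_i=1]+(1-p_i)\expect_{\mu_{\pp}}[f\mid x_i=-1]=0$ gives
\[
\expect_{\mu_{\pp}}[f\mid x_i=1]=\frac{\expect_{\mu_{\pp}}[fx_i]}{2p_i},\qquad -\expect_{\mu_{\pp}}[f\mid x_i=-1]=\frac{\expect_{\mu_{\pp}}[fx_i]}{2(1-p_i)} .
\]
The smaller of the two equals $\expect_{\mu_{\pp}}[fx_i]/(2q_i)$ when the correlation is nonnegative and is at most that otherwise, so in every case
\[
\overline{X}_{\mu_{\pp}}(f)\ \le\ \min_i\frac{\expect_{\mu_{\pp}}[fx_i]}{2q_i}.
\]

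Since a minimum is at most a weighted average, taking weights $q_i$ yields
\[
\overline{X}_{\mu_{\pp}}(f)\ \le\ \frac{\sum_i\expect_{\mu_{\pp}}[fx_i]}{2\sum_i q_i}=\frac{\expect_{\mu_{\pp}}[f\,|x|]}{2\sum_i q_i}.
\]
It is important to retain the true denominator $\sum_i q_i$: the cruder estimate $\sum_i q_i\ge r/2$ already loses too much, as a balanced $\pm1$ function on a skewed cube shows. Next, replacing $f$ by its average over each level $\{|x|=j\}$ leaves $\expect_{\mu_{\pp}}[f]$ and $\expect_{\mu_{\pp}}[f\,|x|]$ unchanged, keeps $|f|\le1$, and does not increase $\expect_{\mu_{\pp}}[|f|]$; so I may assume $f=f(|x|)$. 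Writing $m_j=\mu_{\pp}(|x|=j)$, $\phi_j$ for the value of $f$ on level $j$, and $\lambda=\expect_{\mu_{\pp}}[|f|]$, the number $\expect_{\mu_{\pp}}[f\,|x|]=\sum_j jm_j\phi_j$ is at most the value $V_{\mu_{\pp}}(\lambda)$ of the program: maximize $\sum_j jm_j\phi_j$ subject to $\sum_j m_j\phi_j=0$, $\sum_j m_j|\phi_j|\le\lambda$, and $|\phi_j|\le1$. The same reduction on $\mu_{\half}$ gives $\overline{X}_{\mu_{\half}}(g)=\expect_{\mu_{\half}}[g\,|x|]/r=V_{\mu_{\half}}(\lambda)/r$: by the bathtub principle the fair--coin optimizer is a monotone level function, and the balance constraint forces it to be the symmetric two--sided threshold, i.e.\ exactly the semi--threshold $g$.

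Because $2\sum_i q_i=r+\sum_i|2p_i-1|$, it now suffices to prove
\[
V_{\mu_{\pp}}(\lambda)\ \le\ \Big(1+\tfrac1r{\textstyle\sum_i}|2p_i-1|\Big)\,V_{\mu_{\half}}(\lambda).
\]
Dualizing the balance and budget constraints, $V_{\mu_{\pp}}(\lambda)=\min_{a\le b}\big\{\tfrac{b-a}{2}\lambda+\expect_{\mu_{\pp}}[(|x|-b)_+]+\expect_{\mu_{\pp}}[(a-|x|)_+]\big\}$, and by symmetry the fair--coin optimum is attained at $a=-b$. So $V_{\mu_{\pp}}(\lambda)$ is controlled by inserting one well--chosen interval $[a,b]$ into the dual and comparing the resulting truncated moment of the skewed walk $|x|=\sum_i x_i$ against that of the fair walk. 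At $\lambda=1$ the program value is the median--centered mean absolute deviation of $|x|$, and the target inequality becomes $\operatorname{MAD}_{\mu_{\pp}}(\sum_i x_i)\le(1+\tfrac1r\sum_i|2p_i-1|)\operatorname{MAD}_{\mu_{\half}}(\sum_i x_i)$, which for equal biases $\delta$ is governed by the elementary bound $\sqrt{1-\delta^2}\le 1+\delta$; the general $\lambda$ is its truncated analogue.

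I expect this last comparison to be the crux. The difficulty is that $\mu_{\pp}$ is skewed, so neither centering the dual interval at $0$ nor at the mean of $|x|$ works uniformly, and the coordinatewise convex order between the skewed and fair walks genuinely fails (already for $r=1$, where $\expect(Y_1-\tfrac32)_+>0=\expect(Z_1-\tfrac32)_+$). The route I would pursue is to exploit that $V_{\mu_{\pp}}(\lambda)$, being a minimum of quantities affine in each $p_i$, is symmetric and concave in each coordinate, and, together with the flip symmetry $\pp\mapsto\one-\pp$ and the freedom to translate $[a,b]$, to reduce to equal biases where the inequality is the scalar statement above; alternatively one builds a direct dual certificate with $[a,b]$ centered so as to neutralize the skew. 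Either way, converting the generous--looking slack $1+\tfrac1r\sum_i|2p_i-1|$ into a rigorous bound is the main obstacle. The strict inequality when some $p_i\neq1/2$ and $\lambda=1$ (the ``furthermore'') should then follow from the strict gap $\sqrt{1-\delta_i^2}<1+\delta_i$, propagated through the dual.
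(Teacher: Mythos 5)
Your reductions are individually sound: the identities $\expect_{\mu_\pp}[f \cond x_i = 1] = \expect_{\mu_\pp}[f x_i]/(2p_i)$ and $-\expect_{\mu_\pp}[f \cond x_i = -1] = \expect_{\mu_\pp}[f x_i]/(2(1-p_i))$, the mediant bound $\min_i a_i/w_i \le \sum_i a_i/\sum_i w_i$, the level-averaging step (which preserves $\expect[f]$ and $\expect[f\,|x|]$ and, by Jensen, does not increase $\expect[|f|]$), and the identification of the fair-coin LP optimizer with the semi-threshold function are all correct. But the proof is not complete: everything is funneled into the comparison
\[
V_{\mu_\pp}(\lambda) \;\le\; \Bigl(1 + \tfrac1r\textstyle\sum_i |2p_i - 1|\Bigr)\, V_{\mu_\half}(\lambda),
\]
which you yourself flag as the crux and then do not prove --- you sketch two possible routes (concavity in the $p_i$ plus symmetrization, or a direct dual certificate) and concede that making the slack rigorous is ``the main obstacle.'' This inequality is essentially the entire content of the lemma repackaged: it is a cross-measure comparison of LP values, quantified over all $r$, all bias vectors $\pp$, and all budgets $\lambda$, and nothing in the proposal establishes it beyond an equal-bias, $\lambda=1$, large-$r$ heuristic ($\sqrt{1-\delta^2}\le 1+\delta$). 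Your own remarks about why the natural stochastic-ordering and centering arguments fail indicate that closing this gap is genuinely hard. The ``furthermore'' (strictness when $\expect_{\mu_\pp}[|f|]=1$ and some $p_i\ne 1/2$) is likewise only asserted, not derived.

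For contrast, the paper never compares values across different measures at all; it moves the measure and the function together, one coordinate at a time. Given $p_1<1/2$, it replaces $f$ by $f'(x) = 2p_1 f(x) + (1-2p_1)f(x^1)$ when $x_1=1$ (and $f'=f$ when $x_1=-1$), and replaces $p_1$ by $1/2$. This pointwise averaging preserves $\mu(x)f(x)+\mu(x^1)f(x^1)$, hence keeps $\expect[f']=0$ and all conditional expectations on coordinates $i\ne 1$; it does not increase $\expect[|f'|]$ (triangle inequality); and it does not decrease $\ov{X}$, the only nontrivial coordinate being the first, where the new measure splits evenly so $\expect[f'\cond x_1=1]=-\expect[f'\cond x_1=-1]$. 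After at most $r$ such steps one is at $\mu_\half$, where the rearrangement (bathtub) step you also use finishes the argument, and the strictness claim falls out because the averaging forces some value of $f'$ into $(-1,1)$. If you want to salvage your plan, the most plausible way to prove your comparison inequality is exactly such a one-coordinate-at-a-time induction --- but at that point you will have reconstructed the paper's shifting argument, and the detour through the dual/MAD formulation buys nothing.
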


\begin{remark}
Lemma~\ref{main} is a generalization of \cite[Theorem 2.33]{OD}, which says that the majority function is the boolean function maximizing the sum of the level one fourier coefficients.
\end{remark}

\begin{proof} If $\pp \neq (1/2,\ldots, 1/2)$, we first employ a shifting procedure jointly on the function and measure which ends at the uniform measure without reducing $\overline{X}$. Formally, let $f: \{-1,1\}^r \rightarrow [-1,1]$ and $\mu = \mu_{\pp}$ be such that $\expect_{\mu}[f] = 0$ and $\expect_{\mu}[|f|] = \lambda$. Suppose without loss of generality that $p_1 < 1/2$. We show there is $f'$ such that with $\pp' = (1/2, p_2,\ldots, p_r)$ and $\mu' = \mu_{\pp'}$, $\overline{X}_{\mu'}(f') \geq \overline{X}_\mu(f)$, $\expect_{\mu'}[f'] = 0$, and $\expect_{\mu'}[|f'|] \leq \lambda$.

Let $x^1$ denote the element of $\{-1,1\}^r$ obtained by switching the first coordinate of $x$, and define
\beq{shift}
f'(x) = 
\begin{cases}
2 p_1 f(x) + (1-2p_1)f(x^1) & x_1 = 1,\\
f(x) & x_1 = -1;
\end{cases}
\enq
It is easy to see that $f' : \{-1,1\}^r \rightarrow[-1,1]$ and, crucially, 
\[ \mu'(x^1)f'(x^1) + \mu'(x) f'(x) =  \mu(x^1) f(x^1) + \mu(x)f(x)\]
for every $x \in \{-1,1\}^r$.
Moreover, 
$$\mu'(x^1)|f'(x^1)| + \mu'(x) |f'(x)| \leq \mu(x^1)|f(x^1)| + \mu(x)|f(x)|,$$
by the triangle inequality.
This implies that 
$\expect_{\mu'}[f']  = \expect_\mu [f] = 0$ and $\expect_{\mu'}[|f'|] \leq \expect_\mu[|f|]$. Similarly for $i\neq 1$, $\expect_{\mu'} [f' \cond x_i = 1] = \expect_{\mu} [f \cond x_i = 1]$ and $\expect_{\mu'} [f' \cond x_i = -1] = \expect_{\mu} [f \cond x_i = -1]$.
Furthermore, as $f'(x) = f(x)$ when $x_1 = -1$,
\[\expect_{\mu'} \big[f' \cond x_1 = -1\big] = \expect_{\mu} \big[f \cond x_1 = -1\big].
\]
Lastly, as $\mu'(\{x_1 = 1\}) = \mu'(\{x_1 = -1\}) = 1/2$ and $\expect_{\mu'} [f'] = 0$,
\[\expect_{\mu'} \big[f' \cond x_1 = 1 \big] = -\expect_{\mu'} \big[f' \mid x_1 = -1\big].\]
These equations show that $\overline{X}_{\mu'}(f') \geq \overline{X}_\mu(f)$ (and as already noted, $\expect_{\mu'}[|f'|] \leq \expect_\mu[|f|]$). We continue this process until $p_i = 1/2$ for all $i$.

We may therefore assume that $\mu = \mu_\half$. In this case, $\expect_\mu [f | x_i = 1] = \expect_\mu [f x_i]$, as $\expect_\mu f = 0$, and we may assume this expectation is non-negative. Furthermore, the maximum of 
\[
    \sum_i \expect_\mu \big[f x_i\big]  =  \expect_\mu \big[f \sum_i  x_i \big],
\]
subject to $\expect_\mu [|f|] \leq \lambda$ is attained at a semi-threshold function. Therefore
\[\overline{X}(f) = \min_i \expect_\mu[f x_i] \leq \frac{1}{r} \expect_\mu \big[f \sum_i  x_i \big] \leq  \frac{1}{r} \expect_\mu \big[g \sum_i  x_i \big],\]
where $g$ is a semi-threshold function satisfying $\expect_{\mu_{\half}} [|g|] \leq \lambda$ and $\expect_{\mu_{\half}} [|g|]$ is as large as possible.

If $\lambda = 1$, the upper bound is concise, and the maximizer is unique: We have
\begin{equation}\label{OD}
\expect_{\mu_\half} \big[f \sum_i  x_i \big] \leq 2r \binom{r-1}{\lfloor r/2 \rfloor}/2^r
\end{equation}
with equality only if $f = \text{sgn}(\sum_i x_i)$ when $\sum_i x_i \neq 0$. Moreover, this maximum cannot be achieved if some $p_i \neq 1/2$: Suppose $f'$ is the result of shifting one or more $p_i$ to $1/2$. The averaging in \eqref{shift} guarantees an $x$ with $f'(x) \in (-1,1)$. This already shows strict inequality in (\ref{OD}) when $r$ is odd because $\expect_\mu \big[f \sum_i  x_i \big]$ has a unique maximizer when $r$ is odd. For $r$ even, more care is needed. We may assume that $f'$ was obtained from $f$ after shifting $p_1 < 1/2$ to $1/2$ and $p_i = 1/2$ for $i> 1$. If equality holds in (\ref{OD}), then $f'(x) = 1$ when $\sum_i x_i > 0$. Due to the definition of $f'$ in \eqref{shift}, this can only happen if $f(x) = 1$ for every $x$ such that $x_1 = -1$ and $\sum_i x_i \geq 0$. However, in this case,
\[
\expect [f' \cond x_1 = -1] \geq 0,
\]
which shows that $\ov{X}(f') = 0$.
\end{proof}

\begin{proof}[Proof of Theorem~\ref{partiteThm}]

Let $\mathcal{K}$ be the complete $r$-uniform $r$-equipartite hypergraph on the same vertex partition $C_1\cup  C_2 \cup \cdots \cup C_r$ as $\cH$. Consider $f:E(\mathcal{H}) \rightarrow [-1,1]$ with $f(E(\mathcal{H})) = 0$ and define $C_i^1 = \{v\in C_i : \sum_{A \ni v}f(A)) \geq 0\}$, $C_i^{-1} = C_i \setminus C_i^1$. This allows for a natural map $\phi$ from the set of edges of $\mathcal{K}$ to the set of $r$-tuples with entries in $\{-1,1\}$, namely $\phi(A)_i = 1$ if and only if $A \cap C_i \sub C_i^1$. We say that two edges $A$ and $B$ in $E(\mathcal{K})$ are in the same class if $\phi(A) = \phi(B)$ and denote this relationship $A\sim B$.

\begin{claim}\label{redlem}
There exists $f' : E(\mathcal{K}) \rightarrow [-1,1]$ such that 
\begin{gather}
    \label{reduced} f'(A) = f'(B) \text{ if } A \sim B;\\
    \label{partmono} X(f') \geq X(f);\\
    \label{bal} f'(E(\mathcal{K}))=0.
\end{gather}
\end{claim}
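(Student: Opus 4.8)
The plan is to obtain $f'$ from $f$ by \emph{averaging within each class}. First I would extend $f$ to all of $E(\mathcal{K})$ by declaring $f(A) = 0$ for $A \in E(\mathcal{K}) \setminus E(\mathcal{H})$; since these new edges contribute nothing to any vertex sum $\sum_{A \ni v} f(A)$, this changes neither $X(f)$ nor the sets $C_i^{1}, C_i^{-1}$, and it preserves $f(E(\mathcal{K})) = 0$. Then, for each class with at least one edge, I set $f'(A)$ equal to the mean of $f$ over the edges of that class,
\[
f'(A) = \frac{1}{|\{B : \phi(B) = \phi(A)\}|}\sum_{B \sim A} f(B),
\qquad A \in E(\mathcal{K}).
\]
This is a convex average of values in $[-1,1]$, so $f' : E(\mathcal{K}) \to [-1,1]$; it is constant on classes, giving \eqref{reduced}; and since averaging preserves total mass, $f'(E(\mathcal{K})) = f(E(\mathcal{K})) = 0$, giving \eqref{bal}.

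The content is \eqref{partmono}, and here I would compute the symmetrized vertex sums directly. Fix $v \in C_i$ and let $\eps = 1$ if $v \in C_i^1$ and $\eps = -1$ otherwise. Every edge through $v$ lies in a class $t$ with $t_i = \eps$, and the number of edges through $v$ in such a class $t$ is $\prod_{j \ne i}|C_j^{t_j}|$, while the class itself has $\prod_j |C_j^{t_j}|$ edges. The crucial point is that these two products differ exactly by the factor $|C_i^{t_i}| = |C_i^{\eps}|$, so the class-size normalization in $f'$ cancels against the edge count and leaves
\[
\sum_{A \ni v} f'(A)
= \sum_{t : t_i = \eps} \frac{1}{|C_i^\eps|}\sum_{\phi(A) = t} f(A)
= \frac{1}{|C_i^\eps|}\sum_{u \in C_i^\eps}\ \sum_{A \ni u} f(A),
\]
the last equality because $\{A : \phi(A)_i = \eps\} = \{A : A \cap C_i \subseteq C_i^\eps\}$, which splits as a disjoint union over the vertices $u \in C_i^\eps$. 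In words, the symmetrized vertex sum at $v$ is the \emph{average} of the original vertex sums over all vertices of $C_i$ on the same side as $v$.

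The conclusion is then immediate from the definition of $C_i^{1}, C_i^{-1}$ and of $X(f)$. If $\eps = 1$, each $u \in C_i^1$ satisfies $\sum_{A \ni u} f(A) \ge 0$ together with $|\sum_{A\ni u} f(A)| \ge X(f)$, hence $\sum_{A\ni u} f(A) \ge X(f)$; averaging gives $\sum_{A \ni v} f'(A) \ge X(f)$. Symmetrically, if $\eps = -1$ then $\sum_{A\ni v} f'(A) \le -X(f)$. In either case $|\sum_{A\ni v} f'(A)| \ge X(f)$, so $X(f') \ge X(f)$, which is \eqref{partmono}.

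The routine part is the edge-counting bookkeeping; the one step deserving care — and the place where the complete equipartite structure of $\mathcal{K}$ is genuinely used — is the cancellation $\prod_{j}|C_j^{t_j}| \big/ \prod_{j\ne i}|C_j^{t_j}| = |C_i^\eps|$, which is exactly what makes the symmetrized vertex sum an honest average of the original vertex sums rather than some arbitrary reweighting that could decrease $X$. Everything else follows formally.
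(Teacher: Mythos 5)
Your proof is correct and takes essentially the same approach as the paper's: extend $f$ by zero to $E(\mathcal{K})$, average within each class, and observe that the symmetrized vertex sum at $v \in C_i^{\eps}$ equals the average of the original vertex sums over the vertices of $C_i^{\eps}$, hence has absolute value at least $X(f)$ by the definition of $C_i^{1}, C_i^{-1}$. The only difference is presentational: you spell out the edge-counting cancellation $\prod_j |C_j^{t_j}| \big/ \prod_{j\neq i}|C_j^{t_j}| = |C_i^{\eps}|$ that the paper leaves implicit.
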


\begin{proof}
We think of $f: E(\cK) \to [-1,1]$ by setting $f(B) = 0$ if $B \not\in E(\mathcal{H})$.
For $A\in E(\cK)$ let $c_A = |\{B : B \sim A\}|$, and set 
\[f'(A) = \frac{1}{c_A}\sum_{B \sim A} f(B).\]
To see \eqref{partmono}, fix $v$ in, say, $C_1^1$. Since 
\[\sum_{A \in \cC} f'(A) = \sum_{A \in \cC} f(A)\]
for any class $\cC$, we have
\[f'(\{A \ni v\}:= \sum_{A \ni v} f'(A) = \frac{1}{|C_1^1|} \sum_{A \cap C_1^1 \neq \0} f'(A) = \frac{1}{|C_1^1|} \sum_{A \cap C_1^1 \neq \0} f(A) \geq \min_{u \in C_1^1} f(\{A \ni u\}) \geq X(f),\]
where the last inequality holds by definition of $C_1^1$. Similar reasoning shows that for any $v \in C_i^{-1}$, $f'(\{A \ni v\}) \leq -X(f)$, so we have shown \eqref{partmono}. It is clear that $f'$ also satisfies \eqref{reduced} and \eqref{bal}.
\end{proof}

It remains to bound $X(f')$ for $f'$ satisfying the conclusions of Claim~\ref{redlem}. Property \eqref{reduced} allows us to define $h:\{-1,1\}^r \to [-1,1]$ by $h(\phi(A)) = f'(A)$ (which defines $h$ on all of $\{-1,1\}^r$ since $\phi$ must be surjective from the condition $f(E(\cH)) = 0$ and the free assumption $X(f) > 0$). Property (\ref{bal}) then implies $\mathbb{E}_{\mu}h = 0$ when $\mu = \mu_{\pp}$ is the product measure on $\{-1,1\}^r$ with $p_i = |C_i^1|/|C_i|$. Moreover, if $v \in C_i^1$, then by (\ref{reduced}),
\[f'(\{A \ni v\}) = \sum_{x \in \{-1,1\}^r, \atop x_i = 1} h(x) \prod_{j \neq i} |C_j^{x_j}| = \expect_{\mu_\pp} [h \cond x_i = 1]\prod_{j \neq i} |C_j|\]
(and $f'(\{A \ni v\}) = \expect_{\mu_\pp} [h | x_i = -1]\prod_{j \neq i} |C_j|$  if $v\in C_i^{-1}$). Therefore by (\ref{partmono}),
\[X(f) \leq X(f') = \ov{X}(h) n^{r-1}.\]
Lastly, the averaging of Claim~\ref{redlem} gives
\[\expect_{\mu_\pp} |h| \leq \frac{|E(\mathcal{H})|}{\prod_{j=1}^r |C_j|}.\]
Applying Lemma~\ref{main} to $h$ completes the proof.
\end{proof}

\subsection{Cases of equality}\label{equality}

The upper bound of Theorem~\ref{partiteThm} is best possible whenever the averaging procedure of Claim~\ref{redlem} outputs a semi-threshold function attaining the maximum in Lemma~\ref{main}.
To illustrate, we describe the simple construction showing equality in \eqref{complete} in a case which avoids divisibility issues.

\begin{example}
Let $\cK$ be the complete $r$-uniform $r$-partite hypergraph with vertex classes of size $n$, in the case that $r$ is odd and $n$ is even. We define $f: E(\cK) \to \{-1,1\}$ achieving the maximum imbalance. In each vertex class, fix half of the vertices in each class to `+' and half to `$-$'. Set $f(A)=1$ if the number of `+' vertices in $A$ outweighs the number of `$-$' and $f(A)=-1$ otherwise. It is easy to check that $f$ achieves the upper bound (\ref{complete}).
\end{example}

There is a large class of $r$-uniform $r$-equipartite hypergraphs achieving the upper bound of Theorem~\ref{partiteThm} when four divides $n$. To generalize the example above, let the weight $w_A$ of an edge $A$ be the number of `+' vertices minus the number of `$-$' vertices. Fix an integer $k>0$ and for $A\in E(\cK)$, set $f(A) = 1$ if $w_A \geq k$, $f(A) = -1$ if $w_A \leq -k$, and $f(A) = 0$ otherwise. Let $\cH$ be the subhypergraph of $\cK$ obtained by removing all edges $A$ with $f(A) = 0$. Then $f : E(\cH) \to \{-1,1\}$ and it is easy to check in this case that all inequalities in the proof of Theorem~\ref{partiteThm} are equalities. More generally, one can partition the set of edges of weight $k$ into $\{E_i\}_{i=1}^\ell$ such that all parts have the same number of edges and in each $E_i$, every `+' vertex is in the same number of edges and every `$-$' vertex is in the same number of edges. Similarly, one can partition the edges of weight $-k$ into $\{F_i\}_{i=1}^\ell$ with parts of the same size. Then for $I,J \subseteq [\ell]$ with $|I| = |J|$, let $E_I = \cup_{i \in I} E_i$ and $F_J = \cup_{j\in J} F_j$. The hypergraph $\cH \sm (E_I \cup F_J)$ with $f$ defined above is also a case of equality for Theorem~\ref{partiteThm}. 

\section{The complete hypergraph}\label{compsec}

Let $\cK_n^{r} = (V,E)$ be the complete $r$-uniform hypergraph with $|V| = n$. We aim to determine the maximum over $f:E\rightarrow [-1,1]$ satisfying $\sum_A f(A) = 0$ of 
\[X(f) = \min_v |\sum_{A \ni v} f(A)|.\]
When $n$ is even, a natural candidate for the maximizer is the following. Partition the vertex set into two equal parts $V_1$ and $V_2$. Define the function $f$ by $f(A) = 1$ if $|A\cap V_1| > r/2$, $f(A) = -1$ if $|A\cap V_1| < r/2$, and $f(A) = 0$ if $|A\cap V_1| = r/2$. In this section, we prove an upper bound that is exact for this example.

Our analysis proceeds by showing that for any maximizer $f$, there is a partition of the vertex set $V = P \cup N$ such that $f(A)$ is determined by $|A\cap P|$. In this symmetric setting, we employ a type of shifting to show that the maximum is only achieved on an (almost) equipartition. 

\subsection{Reduction to the symmetric case}\label{compred}

\begin{lem}\label{compave}
Given $f: E(\cK_n^r) \rightarrow [-1,1]$ satisfying $\sum_A f(A) = 0$, there are a function $f' : E(\cK_n^r) \rightarrow [-1,1]$ and a partition $V = P \cup N$ such that 
\begin{gather}
    \label{itm:ave} \sum_A f'(A) = 0;\\
    \label{itm:mono} X(f') \geq X(f);\\
    \label{itm:det} |A \cap P| = |B \cap P| \implies f'(A) = f'(B).
\end{gather}
\end{lem}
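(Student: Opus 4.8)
The plan is to replicate the averaging argument of Claim~\ref{redlem}, replacing the $r$-part sign partition used there by the single two-part partition that naturally arises in the complete hypergraph. First I would set $P = \{v \in V : \sum_{A \ni v} f(A) \geq 0\}$ and $N = V \sm P$, so that by the definition of $X(f)$ every $v \in P$ satisfies $\sum_{A \ni v} f(A) \geq X(f)$ and every $v \in N$ satisfies $\sum_{A \ni v} f(A) \leq -X(f)$. The orbits of the action on $E$ of the permutations of $V$ preserving $P$ and $N$ are exactly the sets of edges sharing a common value of $|A \cap P|$, so I would define $f'$ by averaging $f$ over these orbits:
\[
f'(A) = \frac{1}{|\{B : |B \cap P| = |A \cap P|\}|}\sum_{B : |B \cap P| = |A \cap P|} f(B).
\]
By construction $f'$ is constant on each orbit, which is exactly \eqref{itm:det}, and averaging within an orbit preserves the sum of $f$ over that orbit; summing over all orbits then gives $\sum_A f'(A) = \sum_A f(A) = 0$, which is \eqref{itm:ave}.

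The substance is \eqref{itm:mono}. Since $f'(A)$ depends only on $|A \cap P|$, the quantity $\sum_{A \ni v} f'(A)$ takes a common value $s_P$ for all $v \in P$ and a common value $s_N$ for all $v \in N$: any two vertices on the same side are exchanged by an automorphism of $\cK_n^r$ fixing $P$ and $N$ setwise, and such automorphisms preserve $f'$. I would then evaluate these sums by double counting incidences. On the one hand,
\[
|P|\, s_P = \sum_{v \in P}\sum_{A \ni v} f'(A) = \sum_A |A \cap P|\, f'(A),
\]
and the crucial point is that averaging does not change this weighted sum: the weight $|A \cap P|$ is constant on each orbit, so $\sum_A |A \cap P|\, f'(A) = \sum_A |A \cap P|\, f(A)$. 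Reversing the double count on $f$ gives $\sum_A |A \cap P|\, f(A) = \sum_{v \in P} \sum_{A \ni v} f(A) \geq |P|\, X(f)$ by the definition of $P$, whence $s_P \geq X(f)$. The symmetric computation over $N$ gives $s_N \leq -X(f)$, and together these yield $|\sum_{A \ni v} f'(A)| \geq X(f)$ for every $v$, establishing \eqref{itm:mono}.

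The only real obstacle is the weighted-sum invariance used in the previous step: the monotonicity \eqref{itm:mono} does not follow from preservation of the total sum alone (which only gives \eqref{itm:ave}), but from preservation of the $|A \cap P|$-weighted sum, and this holds precisely because $|A \cap P|$ is an invariant of the averaging orbits. I would also record the degenerate cases $P = V$ or $N = V$: since $\sum_v \sum_{A \ni v} f(A) = r\sum_A f(A) = 0$, if all vertex-sums have the same sign they must all vanish, forcing $X(f) = 0$ and making \eqref{itm:mono} vacuous. In every case the construction above delivers a function $f'$ satisfying \eqref{itm:ave}, \eqref{itm:mono}, and \eqref{itm:det}.
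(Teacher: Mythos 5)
Your proposal is correct and takes essentially the same approach as the paper: the same sign-based partition $V = P \cup N$, the same averaging of $f$ over the classes $\{A : |A \cap P| = j\}$, and the same double count of the $|A\cap P|$-weighted sum (which is invariant under the averaging) to establish \eqref{itm:mono}. Your automorphism phrasing of why the vertex sums are constant on each side, and your explicit handling of the degenerate case $P=V$ or $N=V$, are only minor stylistic additions to the paper's argument.
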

\begin{proof}
Set $P = \{v : \sum_{A \ni v} f(A) > 0\}$ and $N = V \setminus P$. Define $E_j = \{A : |A \cap P| = j\}$, and for $A \in E_j$, set $f'(A) = \sum_{B \in E_j} f(B) / |E_j|$. It is clear that $f'$ satisfies properties (\ref{itm:ave}) and (\ref{itm:det}). Observe that
\[\sum_{v\in P} \sum_{A\ni v} f'(A) = \sum_{A} f'(A) |A \cap P| = \sum_{A} f(A) |A \cap P| = \sum_{v\in P} \sum_{A\ni v} f(A).\]
The inner sum on the left is the same for all $v\in P$, so for each such $v$,
\[\sum_{A \ni v} f'(A) = \frac{1}{|P|} \sum_{w\in P} \sum_{A \ni w} f(A) \geq \min_{w\in P} \sum_{A \ni w} f(A).\]
Similarly for any vertex $v \in N$, $\sum_{A \ni v} f'(A) \leq \max_{w\in N} \sum_{A \ni w} f(A)$, which establishes \eqref{itm:mono}.
\end{proof}

\subsection{Shifting}

In the remainder of the section we will assume \eqref{itm:det} holds (with $f$ in place of $f'$) for the partition $V = P \cup N$ where $\vsum{v}{A}{f} > 0$ if $v \in P$ and $\vsum{v}{A}{f} < 0$ if $v \in N$. Set $|P| = n-k$ where $0 < k < n$, so $|E_i| = \C{n-k}{i}\C{k}{r-i}$ (recalling $E_i := \{A : |A\cap P| = i\}$). It is clear in this case that if $v \in P$ and $w \in N$, then
\begin{equation}\label{eq:tot}
    |P|\vsum{v}{A}{f} + |N|\vsum{w}{A}{f} = 0.
\end{equation} 
\begin{claim}\label{range}
Let $f$ maximize $X(\cdot)$ over all functions mapping $E(\cK_n^r)$ to $[-1,1]$ and suppose $f$ satisfies (\ref{itm:det}). Then $f(A) \leq f(B)$ if $A\in E_i,B \in E_{i+1}$ and there is at most one $i$ such that $|E_i| \neq 0$ and $|f(A)| \neq 1$ for $A\in E_i$.
\end{claim}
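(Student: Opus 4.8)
The plan is to encode $f$ by its values on the classes and collapse the problem to a one-dimensional optimization. Under \eqref{itm:det} the function $f$ is constant on each $E_i$; I write $a_i$ for its common value on $E_i$ and $\gamma_i = |E_i| = \binom{n-k}{i}\binom{k}{r-i}$, the index $i$ ranging over those with $E_i \neq \emptyset$. Every $v \in P$ lies in the same number of edges of each class, so all such $v$ share one vertex sum; double counting gives $(n-k)\vsum{v}{A}{f} = \sum_A f(A)|A\cap P| = \sum_i i\gamma_i a_i$, so I set $S_P := \tfrac{1}{n-k}\sum_i i\gamma_i a_i$ (the common value of $\vsum{v}{A}{f}$ for $v\in P$) and let $S_N$ be the common value for $w\in N$. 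By \eqref{eq:tot}, $(n-k)S_P + kS_N = 0$, hence $-S_N = \tfrac{n-k}{k}S_P$ and therefore
\[
X(f) = \min(S_P,-S_N) = \min\Big(1,\tfrac{n-k}{k}\Big)S_P .
\]
For a maximizer we may assume $S_P > 0$, so $X(f)$ is a fixed positive multiple of $S_P$. Since $f$ is a global maximizer and, by \eqref{itm:det}, lies in the class of functions constant on the $E_i$, it maximizes $S_P$—equivalently the linear functional $\sum_i i\gamma_i a_i$—over that class subject to the balance constraint $\sum_i \gamma_i a_i = 0$ (which is $\sum_A f(A) = 0$) and the box constraints $a_i \in [-1,1]$.

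Both assertions then follow from exchange moves that preserve $\sum_i \gamma_i a_i$ while strictly increasing $\sum_i i\gamma_i a_i$, contradicting optimality of $f$. For monotonicity I would suppose $E_i, E_{i+1}\neq\emptyset$ but $a_i > a_{i+1}$; then $a_i > -1$ and $a_{i+1} < 1$, so for small $t>0$ the replacement $a_i \mapsto a_i - t/\gamma_i$, $a_{i+1}\mapsto a_{i+1} + t/\gamma_{i+1}$ keeps all values in $[-1,1]$, leaves $\sum_i \gamma_i a_i$ fixed, and changes $\sum_i i\gamma_i a_i$ by $-it + (i+1)t = t > 0$—a contradiction; hence $a_i \le a_{i+1}$. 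For the second part, if two nonempty classes $E_i, E_j$ with $i<j$ both had $|a_i|,|a_j| < 1$, then for small $s>0$ the replacement $a_i \mapsto a_i - s/\gamma_i$, $a_j \mapsto a_j + s/\gamma_j$ is again feasible and balance-preserving and increases $\sum_i i\gamma_i a_i$ by $(j-i)s > 0$, the same contradiction; so at most one nonempty class carries a value of absolute value below $1$.

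The substantive step is the reduction in the first paragraph: using \eqref{eq:tot} to recognize that the two-sided objective $\min(S_P,-S_N)$ is a fixed positive multiple of the single linear functional $S_P$, which turns the question into a linear program in the $a_i$. After that the exchange arguments are routine; the only point to check is feasibility of the perturbed values, which is immediate from the strict inequality $a_i > a_{i+1}$ in the first case and from interiority in the second. One should also note that the perturbed function is a genuine competitor for the global maximum—it is balanced and constant on the same classes, so \eqref{eq:tot} still applies to it and its $X$-value is the same positive multiple of its $S_P$—which is what licenses the contradiction with maximality.
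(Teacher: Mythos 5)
Your proof is correct and takes essentially the same approach as the paper: an exchange between classes that preserves the balance constraint while increasing the common $P$-side vertex sum, with \eqref{eq:tot} converting the two-sided minimum into a fixed positive multiple of that sum so that the exchange strictly increases $X$. The only minor difference is that your second exchange runs directly between possibly non-adjacent classes, making the ``at most one non-extreme class'' conclusion immediate, whereas the paper states only the adjacent-class exchange and leaves that deduction implicit.
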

\begin{proof}
It is enough to show that if $i \in \{0,\ldots, r-1\}$ such that $|E_i| |E_{i+1}| \neq 0$ and $-1 < f(A)$ for $A\in E_i$ and $f(B) < 1$ for $B\in E_{i+1}$, then we may modify $f$ to increase $X(f)$. Choose $\eps > 0$ so that $f(A) - \eps \geq -1$ and $f(B) + \delta \leq 1$, where $\delta = \eps|E_i|/|E_{i+1}|$. Set $f'(A) = f(A) - \eps$ for $A\in E_i$, $f'(B) = f(B) + \delta$ for $B\in E_{i+1}$, and $f'(C) = f(C)$ otherwise. Then
\[\sum_{v\in P}\vsum{v}{A}{f'} - \sum_{v\in P}\vsum{v}{A}{f} = \sum_A (f'(A) - f(A))|A\cap P| = (i+1) \gd |E_{i+1}| - i\eps|E_i| = \eps|E_i| > 0,\]
so the common value of $\vsum{v}{A}{f'}$ for $v\in P$ is greater than $\vsum{v}{A}{f}$. Similarly (or by (\ref{eq:tot})), $\vsum{w}{A}{f'} < \vsum{w}{A}{f}$ when $w \in N$, which shows that $X(f') > X(f)$.
\end{proof}
\begin{proof}[Proof of Theorem~\ref{compthm}]
Let $f$ maximize $X(\cdot)$ over all functions mapping $E(\cK_n^r)$ to $[-1,1]$. Lemma~\ref{compave} allows us to assume that $V = P \cup N$ such that \eqref{itm:det} holds, so we may assume the conclusion of Claim~\ref{range}. Recall $|P| = n-k$ and $E_i =\{A : |A \cap P| = i\}$.

Let $m = m(k)$ be the largest integer $i$ such that $f(A) < 1$ for $A \in E_i$ and $|E_i| \neq 0$. As $\sum_A f(A) = 0$, we must have $m \geq 0$. Under the assumptions on $f$ and again using $\sum_A f(A) = 0$, we have
\[f(A) = \frac{\sum_{\ell = 0}^{m-1}\binom{n-k}{\ell}\binom{k}{r-\ell} - \sum_{\ell = m+1}^{r}\binom{n-k}{\ell}\binom{k}{r-\ell}}{\binom{n-k}{m}\binom{k}{r-m}}\]
for $A \in E_m$. Substituting this value of $f$ in the expression for $\vsum{v}{A}{f}$, we find $X(f) = \chi(k)$, where
\[\chi(k) = \frac{\sum_{\ell=0}^r |m-\ell|\binom{n-k}{\ell}\binom{k}{r-\ell}}{\max( n-k, k)}.\]
We are finished if we can show $\chi(k)$ is maximized when $k = \lfloor n/2\rfloor$. $\chi(k)$ is symmetric about $n/2$, so we assume without loss of generality that $k \leq n/2 - 1$, and we will show 
$$F(k) = \frac{\sum_{\ell=0}^r |m-\ell|\binom{n-k}{\ell}\binom{k}{r-\ell}}{n-k}$$
is increasing in $k$. It is straightforward to check that either $m(k) = m(k+1)$ or $m(k) = m(k+1) + 1$. 
For $s \in \{0,1,\ldots, r\}$, consider 
$$G(k,s) = \frac{\sum_{\ell=0}^r |s-\ell|\binom{n-k}{\ell}\binom{k}{r-\ell}}{n-k}$$
Using the binomial coefficient identity $\C{n+1}{k} = \C{n}{k} + \C{n}{k-1}$ and collecting terms, we have
\begin{align*}
    G(k+1,s) - G(k,s) &= \frac{\sum_{\ell=0}^r |s-\ell|\binom{n-k-1}{\ell}\binom{k+1}{r-\ell}}{n-k-1} - \frac{\sum_{\ell=0}^r |s-\ell|\binom{n-k}{\ell}\binom{k}{r-\ell}}{n-k}\\
    &= \frac{\sum_{\ell=0}^r |s-\ell| (n-k) \big(\binom{n-k}{\ell} - \binom{n-k-1}{\ell -1}\big)\big(\binom{k}{r-\ell} + \binom{k}{r - \ell -1}\big)}{(n-k)(n-k-1)}\\
    &\phantom{=} -\frac{\sum_{\ell = 0}^r |s-\ell|(n-k-1)\binom{n-k}{\ell}\binom{k}{r-\ell}}{(n-k)(n-k-1)}\\
    &= \frac{\sum_{\ell=0}^r |s-\ell| (1-\ell)\binom{n-k}{\ell}\binom{k}{r-\ell} + |s-\ell|(\ell + 1)\binom{n-k}{\ell + 1}\binom{k}{r-\ell-1}}{(n-k)(n-k-1)}\\
    &= s \frac{\sum_{\ell = 0}^s \binom{n-k}{\ell}\binom{k}{r - \ell} - \sum_{\ell = s+1}^r \binom{n-k}{\ell}\binom{k}{r - \ell}}{(n-k)(n-k-1)}.
\end{align*}
When $s = m(k)$, the last expression is non-negative by the definition of $m$, which shows that $F(k)$ increases if $m(k) = m(k+1)$. When $m(k) = m(k+1) + 1$, we have 
\begin{align*}
F(k+1) - F(k) &= G(k+1, m(k+1)) - G(k, m(k+1)) - \frac{\sum_{\ell = 0}^{m(k)-1} \binom{n-k}{\ell}\binom{k}{r - \ell} - \sum_{\ell = m(k)}^r \binom{n-k}{\ell}\binom{k}{r - \ell}}{(n-k)}\\
&=\frac{\big[(n-k-1)-m(k+1)\big]\big[\sum_{\ell = 0}^{m(k)-1} -\binom{n-k}{\ell}\binom{k}{r - \ell} +    \sum_{\ell = m(k)}^r \binom{n-k}{\ell}\binom{k}{r - \ell}\big]}{(n-k)(n-k-1)},
\end{align*}
where both expressions within parentheses are non-negative by the definition of $m$.
\end{proof}

\section{Concluding remarks and problems}

We have given a tight upper bound on $X(\cH)$ when $\cH$ is an $r$-uniform $r$-equipartite hypergraph with $e$ edges. It would be nice to achieve such an upper bound for arbitrary $r$-uniform $r$-partite hypergraphs. It seems that for general $r$-partite graphs, the maximum of $X(\cH)$ might be achieved at functions which look like weighted semi-threshold functions, where the weights account for the different sizes of the blocks of the vertex partition. 

Theorem~\ref{compthm} gives a tight upper bound on $X(\cK_n^r)$ where $\cK_n^r$ is the complete $r$-uniform hypergraph. It would be interesting to extend this to arbitrary $r$-uniform hypergraphs.
\begin{problem}\label{maximize}
Determine the maximum (or, more likely, give bounds that are often tight) of $X(\cdot)$ over the class of $r$-uniform hypergraphs with $n$ vertices and $e$ edges.
\end{problem}
While the results of this paper strongly suggest what the maximizers of Problem~\ref{maximize} might look like, determining the minimum of $X(\cdot)$ over the class of hypergraphs with a fixed number of edges and vertices seems difficult, even in the case of graphs. It seems possible that $X(G) > d/2 - o(d)$ for every $d$-regular graph $G$. This is supported by results on friendly/internal partitions \cite{BL,BSc,Parts,LinLou,Steib}, which study the existence of a (non-trivial) vertex partition $V = V_0 \cup V_1$ such that every $v\in V_i$ has more neighbors in $V_i$ than in $V_{1-i}$. Given such a partition of the vertices of $G$, a reasonable first step to showing $X(G) > d/2 - o(d)$ is to assign all edges within $V_0$ weight $-1$ and all within $V_1$ weight $1$. It is unclear how to assign weights to the rest of the edges, but perhaps a partition with stronger properties (e.g.\ as in Problem 4.2 of \cite{LinLou}) would help. More generally, the following seems interesting:
\begin{problem}
Find asymptotically tight lower bounds for $X(\cdot)$ over the class of $d$-regular $r$-uniform hypergraphs.
\end{problem}

\section{Acknowledgements}

Thanks to Jeff Kahn and Bhargav Narayanan for helpful conversations and many comments on the manuscript. Additional thanks to Jeff Kahn for suggesting the problem which gave rise to this paper. This work was supported in part by Simons Foundation grant 332622.

\end{document}